 \title{Surfaces of globally $F$-regular type are of Fano type}
\author{Shinnosuke Okawa}
\date{\today}
\subjclass[2010]{Primary 14J45; Secondary 
14E30.}
\keywords{Varieties of Fano type, Varieties of globally $F$-regular type, Mori dream space}
\DeclareMathOperator{\Divi}{Div}
\DeclareMathOperator{\PPic}{\mathbf{Pic}}
\begin{document}
 
 \maketitle
 
\begin{abstract}
We prove that a projective surface of globally $F$-regular type defined over
a field of characteristic zero is of Fano type.
\end{abstract}

%
%

\section{Introduction}\label{sc:intro}
Projective varieties of Fano type form a natural and important class of varieties
from the point of view of the minimal model theory (see \pref{df:fano_type} for
definition). In this paper, we discuss a conjectural characterization of
this class proposed by Schwede and Smith:

\begin{conjecture}[{$=$\cite[Question 7.1]{MR2628797}}]\label{cj:Schwede_Smith_conjecture}
 Let $X$ be a normal projective variety defined over a field of characteristic zero.
 Then $X$ is of Fano type if and only if it is of globally $F$-regular type.
\end{conjecture}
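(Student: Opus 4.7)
The plan is to prove the two implications by rather different techniques. The forward implication, that Fano type implies globally $F$-regular type, is the easier one and essentially appears in the Schwede--Smith paper cited: spread out a KLT log Fano pair $(X,\Delta)$ to a model $(\mathcal{X},\mathcal{D})$ over a finitely generated $\mathbb{Z}$-subalgebra $A$ of the base field. The Hara--Watanabe correspondence says that KLT reduces to strongly $F$-regular modulo $p \gg 0$, and, crucially, ampleness of $-(K_X+\Delta)$ guarantees the existence of Frobenius-splitting sections in $H^0(X_p,\mathcal{O}_{X_p}((1-p)(K_{X_p}+\Delta_p)))$ for almost all closed points of $\operatorname{Spec} A$. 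Combining these with Schwede--Smith's divisorial criterion for global $F$-regularity of log pairs yields globally $F$-regular type.

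For the reverse implication I would proceed in three steps. First, apply the Hara--Mehta--Srinivas theorem to conclude that $X$ has KLT singularities (since each reduction is strongly $F$-regular). Second, use the abundance of Frobenius splittings provided by the globally $F$-regular type hypothesis to deduce that $-K_X$ is big: the hypothesis produces many sections of appropriate powers of $-K_{X_p}$ on infinitely many reductions, and a spreading-out plus semicontinuity argument lifts bigness to characteristic zero. Third, construct an effective $\mathbb{Q}$-divisor $\Delta$ by choosing, on reductions, divisors $D_p$ along which Frobenius splits compatibly, and lifting the resulting numerical data back to a $\Delta$ on $X$ with $(X,\Delta)$ KLT and $-(K_X+\Delta)$ ample. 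Running a $-K_X$-MMP (unconditional for $\dim X\leq 3$, conjecturally available in higher dimensions via the recent progress on the MMP) would help organize this construction and produce $\Delta$ from the bigness.

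The main obstacle is the third step in arbitrary dimension. The divisors $D_p$ arising from Frobenius splittings on reductions need not be pullbacks of a single divisor on $\mathcal{X}$; coherently lifting this positive-characteristic datum across infinitely many primes to produce one $\mathbb{Q}$-Cartier boundary in characteristic zero is exactly what is missing in general. Even granted such a lift, simultaneously ensuring the KLT condition and ampleness of $-(K_X+\Delta)$ requires fine control of discrepancies via the trace maps for Frobenius, which is what test ideal theory provides in positive characteristic but whose characteristic-zero analogue (multiplier ideals) interacts only through reduction mod $p$. In dimension two the classification of log del Pezzo surfaces, the surface MMP, and finite generation results for class groups of $F$-regular surfaces make the third step tractable, which is why the present paper restricts to surfaces; the higher-dimensional case of \pref{cj:Schwede_Smith_conjecture} remains open, and a complete proof would likely require either new lifting theorems from characteristic $p$ to $0$ or new techniques for producing KLT complements from Frobenius-splitting data.
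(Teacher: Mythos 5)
The statement here is a conjecture, and the paper proves it only in dimension two; your proposal correctly separates the known forward direction (Fano type implies globally $F$-regular type, due to Schwede--Smith via Hara--Watanabe/Takagi) from the open reverse direction, so the relevant comparison is between your sketch of the reverse direction and what the paper actually does for surfaces.

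The genuine gap is your third step, and it is worth being precise about why it is the wrong place to attack. Producing a single $\mathbb{Q}$-divisor $\Delta$ in characteristic zero by coherently lifting the splitting divisors $D_p$ from the reductions is exactly the approach the paper identifies as stuck: Schwede and Smith already construct, for each globally $F$-regular reduction $X_\mu$, a boundary $\Delta_\mu$ with $(X_\mu,\Delta_\mu)$ log Fano, and the entire difficulty of the conjecture is that nobody knows how to lift such a $\Delta_\mu$ to characteristic zero. You acknowledge this obstacle, but that means your proposal contains no proof even in dimension two, where you defer to ``the classification of log del Pezzo surfaces'' and finite generation of class groups --- and the paper explicitly emphasizes that its surface proof uses no classification results.

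The paper's route around the obstruction is different in kind: it never lifts the boundary. It (i) shows that a globally $F$-regular surface in characteristic $p$ is a Mori dream space, using Tanaka's cone and contraction theorems together with the Schwede--Smith boundary, which is needed only mod $p$; (ii) runs a $(-K_{X_\mu})$-MMP on the reduction, ending at a model with $-K$ semi-ample and big; (iii) lifts each step of this MMP to characteristic zero --- here the surface hypothesis is essential, since every step is then a divisorial contraction, i.e.\ an honest morphism defined by a globally generated nef line bundle, and such data lift using the vanishing $H^2(X_\mu,\mathcal{O}_{X_\mu})=H^1(X_\mu,L_\mu)=0$ on globally $F$-regular varieties together with deformation theory of line bundles and cohomology-and-base-change; and (iv) concludes that the final model in characteristic zero is a weak Fano variety, hence of Fano type, and traces this back through the MMP. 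The boundary on $X$ appears only at the very end, abstractly, from the weak Fano model. Accordingly, the higher-dimensional obstructions are not the ones you list but rather (a) whether globally $F$-regular varieties are Mori dream spaces, so that the anticanonical MMP exists mod $p$ at all, and (b) whether $(-K)$-flips can be lifted to characteristic zero, which the paper reduces to a reflexivity question for pluri-anticanonical sheaves on the base of the flipping contraction.
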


For the context behind this conjecture, see \cite{MR2628797}.
The `only if' direction is already proven in \cite[Theorem 1.2]{MR2628797}, as an application of
\cite[Corollary 3.4]{MR2047704}. That is, a variety of Fano type
defined over a field of characteristic zero is
of globally $F$-regular type.
Aiming at the other direction,
the authors constructed a boundary divisor $\Delta$ for any globally $F$-regular variety $X$
(defined in a fixed positive characteristic)
such that $(X,\Delta)$ is log Fano \cite[Theorem 4.3 (i)]{MR2628797}.
The difficulty lies in that it is not clear if the boundary divisor $\Delta$ can be
lifted to characteristic zero or not.

On the other hand, it was proven in \cite[Theorem 1.2]{MR3275656} that
a Mori dream space of globally $F$-regular type is of Fano type.
Note that since a variety of Fano type in characteristic zero is a Mori dream space by \cite[Corollary 1.3.2]{MR2601039},
if \pref{cj:Schwede_Smith_conjecture} was solved affirmatively, then it would imply that a variety of globally $F$-regular type
is a Mori dream space.

The purpose of this paper is to prove \pref{cj:Schwede_Smith_conjecture} in dimension $2$,
\emph{without} assuming $X$ is a Mori dream space:

\begin{theorem}\label{th:SS_conjecture_for_surfaces}
Let $X$ be a projective surface defined over a field of characteristic zero.
Suppose that $X$ is of globally $F$-regular type. Then $X$ is of Fano type.
\end{theorem}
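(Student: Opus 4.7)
The plan is to reduce the statement to the Mori dream space setting addressed by \cite[Theorem 1.2]{MR3275656}, which asserts that a Mori dream space of globally $F$-regular type is of Fano type; the task is therefore to show that every projective surface of globally $F$-regular type over a field of characteristic zero is automatically a Mori dream space.

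Two consequences of the globally $F$-regular type hypothesis come first. Since strong $F$-regularity lifts to klt singularities in characteristic zero, $X$ is klt, and the vanishing theorems available in the globally $F$-regular setting give $p_g(X) = q(X) = 0$. Independently, for each prime $p$ in a dense set, \cite[Theorem 4.3 (i)]{MR2628797} furnishes an effective $\mathbb{Q}$-divisor $\Delta_p$ on the reduction $X_p$ such that $(X_p, \Delta_p)$ is log Fano; in particular $-K_{X_p}$ is big, and the identity $h^0(X, -nK_X) = h^0(X_p, -nK_{X_p})$ for $p \gg 0$ then gives that $-K_X$ is big in characteristic zero.

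Let $\pi \colon \tilde X \to X$ be the minimal resolution. Since $p_g(\tilde X) = q(\tilde X) = 0$ and $P_n(\tilde X) = 0$ for all $n \geq 1$ (using bigness of $-K_X$ together with the relation $K_{\tilde X} = \pi^* K_X + \sum a_i E_i$ and $a_i \leq 0$ on the minimal resolution of a klt surface), Castelnuovo's criterion shows that $\tilde X$ is a smooth rational surface. I would next show that $-K_{\tilde X}$ itself is big. The key observation is that on the minimal resolution of a klt surface singularity all exceptional discrepancies $a_i$ are non-positive --- vanishing at Du Val points and lying in $(-1, 0)$ otherwise, as read off from the Hirzebruch--Jung continued fraction expansion in the cyclic quotient case. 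Rearranging the discrepancy formula,
\[
 -K_{\tilde X} \;=\; \pi^{*}(-K_X) \,+\, \sum (-a_i) E_i,
\]
exhibits $-K_{\tilde X}$ as the sum of the pullback of a big divisor and an effective one, hence big. The theorem of Testa--V\'arilly-Alvarado--Velasco --- a smooth projective rational surface with big anticanonical class is a Mori dream space --- then applies to $\tilde X$.

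Since the Mori dream space property descends along projective birational morphisms, $X$ is itself a Mori dream space, and \cite[Theorem 1.2]{MR3275656} applied to $X$ concludes that $X$ is of Fano type. I expect the main obstacle to be the uniform non-positivity of discrepancies on the minimal resolution across all klt surface singularities; this is classical for cyclic quotients via Hirzebruch--Jung but may require a careful argument or an appeal to the classification of klt surface singularities to establish in full generality. A secondary point requiring care is the descent of the Mori dream space property from $\tilde X$ to $X$, which should be handled via the Cox-ring interpretation of birational contractions.
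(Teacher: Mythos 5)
Your overall strategy --- proving that $X$ itself is a Mori dream space in characteristic zero by passing to the minimal resolution $\pi\colon\tilde X\to X$, invoking the Testa--V\'arilly-Alvarado--Velasco theorem on rational surfaces with big $-K$, descending the Mori dream space property along $\pi$, and then citing \cite[Theorem 1.2]{MR3275656} --- is genuinely different from the paper's, which never establishes the Mori dream space property in characteristic zero: it proves it only for the reduction $X_\mu$ (\pref{pr:globally_F-regular_surface_is_a_Mori_dream_space}), runs a $(-K_{X_\mu})$-MMP there, and lifts the divisorial contractions together with the global sections of $-mK$ on the final model back to characteristic zero via \pref{lm:line_bundles_lift} and \pref{lm:nef_vanishing_on_globally_F_regular_varieties}. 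However, your proposal has a genuine gap at exactly the step that carries all the difficulty: the bigness of $-K_X$ in characteristic zero. For a \emph{fixed} $n$, semicontinuity/constructibility does give $h^0(X,-nK_X)=h^0(X_\mu,-nK_{X_\mu})$ for all closed points $\mu$ outside a proper closed subset \emph{depending on $n$}; on the other hand, the bigness of $-K_{X_\mu}$ supplied by \cite[Theorem 4.3 (i)]{MR2628797} is witnessed only at some $n=n(\mu)$ depending on $\mu$, since the boundary $\Delta_\mu$ is constructed separately for each $\mu$ with no uniformity in $p$. These quantifiers cannot be interchanged: a priori $h^0(X,-nK_X)$ could grow subquadratically for every fixed $n$ while each fiber acquires its abundance of sections only for $n$ large depending on $p$ (equivalently, $\mathrm{vol}(-K_{X_\mu})$ could degenerate as $p$ grows). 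Ruling this out is precisely the characteristic $p$ to characteristic $0$ transfer problem that makes \pref{cj:Schwede_Smith_conjecture} nontrivial, and the paper resolves it by a different mechanism (lifting an entire anticanonical MMP and the sections of $-mK$ on its final model over the complete local ring). As written, your argument assumes the crux rather than proving it.

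Two smaller points. The issue you flag as the main obstacle is in fact harmless: on the minimal resolution of \emph{any} normal surface singularity all discrepancies are $\le 0$, by the negativity lemma applied to $K_{\tilde X}=\pi^*K_X+\sum a_iE_i$, because $K_{\tilde X}$ is $\pi$-nef in the absence of $(-1)$-curves in the fibers; no classification or Hirzebruch--Jung computation is needed. Also, rationality of $\tilde X$ can be obtained without first knowing $-K_X$ big (use $q=0$ together with $P_2(\tilde X)\le P_2(X)\le P_2(X_\mu)=0$ by semicontinuity, then Castelnuovo), and the descent of the Mori dream space property along $\tilde X\to X$ is indeed available from \cite{okawa2011images} since $X$ is $\bQ$-factorial with $\dim\PPic^0_X=0$; but neither of these repairs the bigness step, and bigness of $-K_{\tilde X}$ is exactly what the Testa--V\'arilly-Alvarado--Velasco theorem requires.
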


Here we describe the outline of our proof.
By \pref{lm:weak_Fano_is_of_Fano_type} and
\pref{lm:trace_back_mmp},
it is enough to show that $X$ admits a ($-K_X$)-MMP whose
final model is a weak Fano variety.
Note that in the proof of \cite[Theorem 1.2]{MR3275656},
the existence of such a ($-K_X$)-MMP was guaranteed by the assumption that
$X$ is a Mori dream space. 

To show the existence of such a nice MMP, we first check
in \pref{pr:globally_F-regular_surface_is_a_Mori_dream_space} that
a globally $F$-regular surface (defined in a fixed positive characteristic)
is a Mori dream space.
This implies that $X_{\mu}$, the reduction of our $X$ to a positive characteristic,
admits a ($-K_{X_{\mu}}$)-MMP.

Next we lift the ($-K_{X_{\mu}}$)-MMP to characteristic zero
to obtain a ($-K_X$)-MMP (strictly speaking we have to pass to
a base change of $X$ in this step, but it is a minor point).
Since $X_{\mu}$ is a surface, each step of the ($-K_{X_{\mu}}$)-MMP
is a divisorial contraction. Hence it is enough to show that the line bundle together with
the global sections which define the morphism lift to characteristic zero
(see \pref{pr:morphisms_lift}).
This follows from the standard deformation theory of line bundles (\pref{lm:line_bundles_lift})
and the vanishing theorem for cohomology of nef line bundles on globally $F$-regular varieties (\pref{lm:nef_vanishing_on_globally_F_regular_varieties}).
The rest of the argument is the same as that of the proof of \cite[Theorem 1.2]{MR3275656}.

Our proof does not use any classification result on surfaces
(the author would like to thank Yoshinori Gongyo
for pointing it out). In the end of this paper we clarify the obstructions which show up when we try
to use our approach in higher dimensions.

After the author finished writing a draft of this paper, which is a part of
the the author's PhD thesis, two relevant papers appeared.
Gongyo and Takagi gave another proof for the main theorem of this paper in \cite{gongyo2013surfaces},
and they also proved that surfaces of dense globally $F$-split type are of log Calabi-Yau type.
There is also the paper \cite{hwang2013characterization}, in which Hwang and Park gave yet another
proof for the main theorem of this paper based on results of Sakai \cite{MR761313}.

\subsection*{Acknowledgements}
The author would like to thank Osamu Fujino for informing him of the paper \cite{MR3319838}.
He would also like to thank Shunsuke Takagi for his useful comments.
He is grateful to the co-authors of the paper \cite{MR3275656}
for the discussion during the preparation of \cite{MR3275656}, which was quite helpful for him
to digest the contents of the paper.
During the preparation of this paper, the author was partially supported by
Grant-in-Aid for JSPS fellow 22-849.

%
%

\section{Preliminaries}
For the notions of singularities in characteristic zero and Mori dream spaces
which will be used below without definition,
see \cite{MR1658959} and \cite{MR1786494} respectively.
Readers can also refer to
\cite[Sections 2.1 --- 2.3]{MR3275656}.

\begin{definition}\label{df:Mori_dream_space} 
A normal projective variety $X$ is called  a \emph{Mori dream space}
provided that the following conditions hold.
\begin{enumerate}[(1)]
\item $X$ is $\bQ$-factorial and $\PPic^0_X$ has dimension zero.  \label{it:fin_pic}
\item The nef cone of $X$ is the affine hull of finitely many semi-ample
line bundles. \label{it:nef_implies_sa}
\item There is a finite collection of small $\bQ$-factorial modifications
$
 f_i \colon X \dasharrow X_i
$
such that each $X_i$ satisfies (\pref{it:fin_pic})(\pref{it:nef_implies_sa}) and
the movable cone of $X$ is the union
of the nef cones of $X_i$. \label{it:movable_cone_condition}
\end{enumerate}
\end{definition}
The symbol $\PPic_X^0$ in (\pref{it:fin_pic}) stands for the identity component of
the Picard scheme of $X$ over $\bfk$
(see \cite[Chapter 9]{MR2222646} for details).
To make sense of \eqref{it:movable_cone_condition},
note that the Weil divisors on $X_i$ are identified with those on $X$
via the small birational map $f_i$.

Consider a normal projective variety
$
 X
$
and an effective $\bQ$-divisor $\Delta$ on $X$
such that
$
 K_ X + \Delta
$
is $\bQ$-Cartier. Such a pair
$
 ( X, \Delta )
$
is called a (\emph{log})\emph{pair}.

\begin{definition}\label{df:fano_type}
A pair $(X,\Delta)$ is called a \emph{log Fano pair} if
its singularity is at worst klt and $- ( K_X + \Delta )$ is ample.
A normal projective variety $X$ is said to be \emph{of Fano type} if
there exists an effective $\bQ$-divisor $\Delta$ on $X$ such that
$(X,\Delta)$ is a log Fano pair.
$
 X
$
is called a \emph{weak Fano variety}
if it admits at worst log terminal singularities and
$
 - K _X
$
is nef and big.
\end{definition}

We will repeatedly use
\begin{lemma}[{$=$ \cite[Remark 2.8]{MR3275656}}]\label{lm:weak_Fano_is_of_Fano_type}
A weak Fano variety is a variety of Fano type.
\end{lemma}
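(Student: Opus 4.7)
The plan is to exhibit an explicit effective $\bQ$-divisor $\Delta$ that witnesses the Fano type condition for a weak Fano $X$. The construction rests on two standard ingredients: Kodaira's lemma, which converts nef-and-big into ample-plus-effective, and the openness of the klt condition under small perturbations of the boundary.

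First, since $-K_X$ is nef and big, by Kodaira's lemma one can find an ample $\bQ$-divisor $A$ and an effective $\bQ$-divisor $E$ on $X$ with
\[
 -K_X \sim_{\bQ} A + E.
\]
For a positive rational number $t$, this yields
\[
 -(K_X + tE) = -K_X - tE \sim_{\bQ} (1-t)(-K_X) + tA,
\]
which is the sum of a nef $\bQ$-divisor (for $t \le 1$) and a strictly positive multiple of an ample $\bQ$-divisor, hence ample for every rational $t \in (0,1]$.

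Next I would invoke the openness of klt: since $(X, 0)$ is klt by hypothesis and $E$ is effective, the pair $(X, tE)$ remains klt for every sufficiently small $t > 0$. (Concretely, choose a log resolution of $(X,E)$; the discrepancies of $(X,tE)$ depend linearly and continuously on $t$, so the strict inequality defining klt is preserved for $0 < t \ll 1$.) Fixing any such $t$ and putting $\Delta := tE$, the pair $(X,\Delta)$ is klt and $-(K_X+\Delta)$ is ample, so $(X,\Delta)$ is log Fano and $X$ is of Fano type.

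There is no real obstacle here; the mildest subtlety is ensuring that the decomposition $-K_X \sim_{\bQ} A + E$ exists, which is exactly Kodaira's lemma for nef-and-big $\bQ$-divisors, and that a single $t>0$ works for both the ampleness of $-(K_X+tE)$ and the klt-ness of $(X,tE)$, which holds because both conditions define open (nonempty) conditions on $t$ near $0$.
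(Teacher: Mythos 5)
Your proof is correct and is exactly the standard argument: Kodaira's lemma to write $-K_X \sim_{\mathbb{Q}} A+E$, then perturb by $tE$ for $0<t\ll 1$, using openness of the klt condition and nef plus ample equals ample. The paper gives no proof of its own, citing only \cite[Remark 2.8]{MR3275656}, and your argument coincides with the intended justification there.
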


The following claim, whose proof is implicitly given in the proof of
\cite[Theorem 3.2]{MR3275656}, will be repeatedly used in this paper.

\begin{lemma}\label{lm:trace_back_mmp}
Let
\begin{equation}
 X \dashto X _{1} \dashto \cdots \dashto X _{\ell}
\end{equation}
be a
$
 ( - K _X )
$-MMP such that
$
 X _{ \ell }
$
is a variety of Fano type.
Then so is
$
 X
$.
\end{lemma}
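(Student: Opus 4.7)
The plan is to induct on the length $\ell$ of the MMP; the base case $\ell = 0$ is immediate since then $X = X_\ell$. For the inductive step it suffices to handle one step $\pi \colon X \dashto X_1$ of a $(-K_X)$-MMP, under the assumption that $X_1$ is of Fano type, and conclude that $X$ is. Since the paper's intended application is \pref{th:SS_conjecture_for_surfaces} and $X$ is then a $\bQ$-factorial projective surface, $\pi$ is automatically a divisorial contraction (surfaces admit no small contractions, hence no flips), so I focus on that case and indicate at the end how one would address flips in general.

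Fix an effective $\bQ$-divisor $\Delta_1$ making $(X_1, \Delta_1)$ log Fano, and let $E$ be the irreducible exceptional divisor of $\pi \colon X \to X_1$. My ansatz is
\[
\Delta := \pi^{-1}_* \Delta_1 + \delta E
\]
for a rational parameter $\delta > 0$ to be specified. The standard discrepancy formula gives
\[
K_X + \Delta = \pi^*(K_{X_1} + \Delta_1) + (a + \delta) E,
\]
where $a := a(E; X_1, \Delta_1)$ satisfies $a > -1$ by klt of $(X_1, \Delta_1)$. The key additional input from the $(-K_X)$-MMP is that $\pi$ contracts a $(-K_X)$-negative extremal ray, so $K_X$ is $\pi$-ample; combined with $E^2 < 0$ and the formula $K_X = \pi^* K_{X_1} + a(E; X_1, 0) E$, this forces $a(E; X_1, 0) < 0$, and therefore $a \le a(E; X_1, 0) < 0$ as well. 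Hence the interval $(-a, 1)$ is non-empty, and I can pick $\delta$ there close to $-a$, so that $\epsilon := a + \delta$ is a small positive rational.

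With this choice, $(X, \Delta)$ is klt (the coefficient $\delta < 1$ of $E$, together with klt of $(X_1, \Delta_1)$ controlled via pullback to a common log resolution, handles all divisorial valuations), and
\[
-(K_X + \Delta) = \pi^* A - \epsilon E, \qquad A := -(K_{X_1} + \Delta_1) \text{ ample on } X_1.
\]
For $\epsilon > 0$ small I would verify ampleness of this divisor on the surface $X$ by Nakai--Moishezon: the self-intersection is $A^2 + \epsilon^2 E^2 > 0$ for small $\epsilon$, the intersection with $E$ is $-\epsilon E^2 > 0$, and the intersection with any other irreducible curve $C$ is $A \cdot \pi_* C - \epsilon\, E \cdot C > 0$ by a straightforward compactness argument on a slice of the two-dimensional Mori cone of $X$. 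This exhibits $(X, \Delta)$ as a log Fano pair, finishing the inductive step and hence the lemma.

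The main obstacle is really establishing the sign $a < 0$, which is secured precisely by the $(-K_X)$-MMP orientation; once this is in hand the rest is discrepancy bookkeeping and a routine Nakai--Moishezon check. In a higher-dimensional setting, one would additionally need to handle flips, which I expect to treat by setting $\Delta := \pi^{-1}_* \Delta_1$ and comparing pullbacks on a common resolution of the small map, using preservation of Fano type under small $\bQ$-factorial modifications; but this is not needed for the surface application.
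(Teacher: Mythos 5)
Your treatment of the divisorial case is essentially the proof the paper intends: the paper gives no argument of its own but defers to the proof of \cite[Theorem 3.2]{MR3275656}, which runs on the same discrepancy computation you use (negativity of $a(E;X_1,\Delta_1)$ forced by the $K$-positivity of the contracted ray, then a boundary on $X$ with $E$-coefficient in $(0,1)$). The only difference is cosmetic: the standard route takes the exact crepant pullback $\Delta=\pi^{-1}_*\Delta_1+(-a)E$, observes that $-(K_X+\Delta)=\pi^*A$ is nef and big, and invokes the fact that a klt pair with nef and big anti-log-canonical divisor is of Fano type, whereas you perturb $\delta$ past $-a$ and check ampleness directly. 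Two small repairs there: the Mori cone of a surface is $\rho(X)$-dimensional, not two-dimensional, so your compactness argument takes place in a slice of $\overline{NE}(X)\subset N_1(X)_{\mathbf R}$ of arbitrary dimension (it still works, or you can simply quote that $\pi^*A-\epsilon E$ is ample for $0<\epsilon\ll1$ because $-E$ is $\pi$-ample); and the klt claim should be justified as ``the crepant pullback with coefficient $-a$ is klt, and adding $\epsilon E$ with $0<\epsilon\ll1$ preserves klt, as one sees on a fixed log resolution'' --- the inequality $\delta<1$ alone does not control the non-exceptional valuations.

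The genuine gap is the flip case. The lemma is stated in arbitrary dimension and is used in that generality in \pref{cr:Fano_type_and_base_change}, where the $(-K_X)$-MMP on a Mori dream space may contain flips, so this case cannot be waved off as irrelevant to the paper. Moreover, your one-sentence recipe does not work as written: for a small map $X\dasharrow X_1$ the strict transform $\Delta=\pi^{-1}_*\Delta_1$ yields a divisor $-(K_X+\Delta)$ that is in general neither ample nor nef, and a ``comparison of pullbacks on a common resolution'' needs the negativity lemma, which requires relative nefness of the divisor being compared. The standard fix is to first choose a general $H_1\sim_\bQ-(K_{X_1}+\Delta_1)$ with $(X_1,\Delta_1+H_1)$ klt, so that $K_{X_1}+\Delta_1+H_1\sim_\bQ 0$ with big boundary; this log Calabi--Yau structure transports through the small map (now the two pullbacks to a common resolution agree by the negativity lemma, both being $\bQ$-linearly trivial), giving a klt pair $(X,\Delta')$ with $K_X+\Delta'\sim_\bQ 0$ and $\Delta'$ big, hence $X$ of Fano type. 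This is precisely the ``Fano type is preserved under small $\bQ$-factorial modifications'' statement you allude to; either prove it along these lines or cite it explicitly, but without it your argument establishes only the surface application, not the lemma as stated.
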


Next we explain about singularities in positive characteristics.
Let $R$ be a commutative algebra defined over a field of characteristic $p>0$
such that the (absolute) Frobenius map
$
 F \colon R \to R
$
is finite. By an abuse of notation, the corresponding morphism between the
spectra will be also denoted by
\begin{equation}
 F \colon \Spec R \to \Spec R.
\end{equation}
The finiteness of $F$ is preserved under morphisms of finite type, and hence
it comes for free if we are dealing with algebraic schemes over a perfect field.
For an $R$-module $M$, the $e$-times pushforward
$
 F^e_*M
$
is isomorphic to $M$ as an abelian group, whereas the action of $R$
is twisted as
\begin{equation}
 r\cdot F^e_*m = F^e_* ( r^{p^e} m ) \in F^e_*M.
\end{equation}
In the rest of paper, the element
$
 F^e_* m \in F^e_* M
$
will be simply denoted by
$
 m
$
if it can be read off from the context.

\begin{definition}\label{df:Frobenius}
\begin{enumerate}[(i)]
\item
An $F$-finite ring $R$ of characteristic $p>0$
is \emph{strongly $F$-regular} if for every $c \neq 0 \in R$,
there exists an integer $e \ge 1$ such that the homomorphism of $R$-modules
\begin{equation}
 cF^e \colon R \xto[]{F^e} F^e_*R \xto[]{F^e_*(c \cdot)} F^e_* R
 \ ; \quad a \mapsto c a^{p^e}
\end{equation} 
splits (i.e. admits a left inverse) as a homomorphism of $R$-modules.
 
\item Let $X$ be a variety defined over an $F$-finite field of characteristic $p>0$.
We say that $X$ is \emph{globally $F$-regular} if for every effective divisor $D$ on $X$,
there exists an integer $e \ge 1$ such that
\begin{equation}
 s_DF^e \colon \cO_X \to F^{e}_{*}(\cO_X(D)); \quad a \mapsto a^{p^e}s_D
\end{equation}
splits as a morphism of $\cO_X$-modules, where
$
 s_D \in H^0(X, \cO_X(D))
$
is the global section of $\cO_X(D)$ defined by the effective divisor $D$.
\item Let $X$ be a variety defined over a field of characteristic zero.
$X$ is said to be \emph{of globally $F$-regular type} if
there exists a model
$
 \pi \colon X_A \to \Spec {A}
$
of $X$
such that
$
 X_{\mu} = X_A \otimes_A k(\mu)
$
is globally $F$-regular for any closed point
$
 \mu \in \Spec A
$
(see \cite[page 7]{MR3275656} for the definition of models).
A ring $R$ of finite type over a field of characteristic zero
is of strongly $F$-regular type if
$
 X = \Spec A
$
is of globally $F$-regular type.
\end{enumerate}
\end{definition}

Globally $F$-regular varieties share nice properties with varieties of Fano type.
In particular, we will use the following
\begin{lemma}[{$=$ \cite[Corollary 4.3]{MR1786505}}]\label{lm:nef_vanishing_on_globally_F_regular_varieties}
Let
$
 X
$
be a globally $F$-regular projective variety.
Then for any nef line bundle
$
 L
$
on
$
 X
$,
the vanishing
\begin{equation}
 H^i ( X, L ) = 0
\end{equation}
holds for any
$
 i > 0
$.
\end{lemma}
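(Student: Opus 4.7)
The plan is to exploit the defining splittings of global $F$-regularity to realize $H^{i}(X,L)$ as a direct summand of a cohomology group that vanishes by Fujita's vanishing theorem. First I would fix an ample Cartier divisor class on $X$, represented by an effective divisor $A$ (replacing by a sufficiently large multiple if necessary so that sections exist), and apply Fujita's vanishing theorem to produce an integer $m_{0} \geq 1$ with the property that
\begin{equation}
H^{i}(X, \cO_X(m_{0} A) \otimes N) = 0
\end{equation}
for every $i > 0$ and every nef line bundle $N$ on $X$. Set $D = m_{0} A$, which is an effective ample divisor.

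Next I would invoke \pref{df:Frobenius}(ii) for this particular $D$: there exists $e \geq 1$ such that the homomorphism
\begin{equation}
s_{D} F^{e} \colon \cO_X \to F^{e}_{*} \cO_X(D)
\end{equation}
splits as a map of $\cO_X$-modules. Tensoring with the line bundle $L$ (a locally free, hence flat, $\cO_X$-module, so that split injections are preserved) and using the projection formula $F^{e}_{*} \cO_X(D) \otimes L \cong F^{e}_{*}(L^{p^{e}} \otimes \cO_X(D))$, I obtain a split monomorphism
\begin{equation}
L \hookrightarrow F^{e}_{*}(L^{p^{e}} \otimes \cO_X(D)).
\end{equation}
Since the Frobenius $F^{e}$ is a finite, hence affine, morphism, $F^{e}_{*}$ is exact on quasi-coherent sheaves and has vanishing higher direct images, so $H^{i}(X, F^{e}_{*}\mathcal{F}) = H^{i}(X, \mathcal{F})$ by Leray. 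Taking $H^{i}(X, -)$ therefore exhibits $H^{i}(X, L)$ as a direct summand of $H^{i}(X, L^{p^{e}} \otimes \cO_X(D))$, and this last group vanishes by the Fujita vanishing set up in the first step, since $L^{p^{e}}$ is nef.

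The main obstacle, as I see it, is that the integer $e$ produced by global $F$-regularity depends on the chosen divisor $D$, so any ampleness one wants to use to trigger a vanishing has to be absorbed into $D$ \emph{before} running the Frobenius splitting argument. This is precisely why the uniform-in-nef form of Fujita's vanishing theorem is indispensable: a bare Serre-type vanishing would force one to track how the positivity needed grows with the twist $L^{p^{e}}$, which is awkward. Once the uniform Fujita vanishing is available, the rest is formal, as global $F$-regularity is exactly the mechanism that lets one trade an ample twist for the original bundle.
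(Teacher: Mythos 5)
Your argument is correct, but it takes a genuinely different route from the paper's proof. You kill $H^i(X, L^{p^e}\otimes \cO_X(D))$ in one shot by invoking Fujita's vanishing theorem, whose uniformity in nef twists is exactly what lets you choose $D$ \emph{before} knowing the exponent $e$ coming from global $F$-regularity; since Fujita vanishing is characteristic-free (this is worth stating explicitly, as the whole argument takes place on a positive-characteristic variety), the rest is the formal splitting-and-projection-formula computation you describe. The paper instead gets by with only Serre vanishing, at the cost of a two-stage argument: first, for each $n>0$, it tensors the plain Frobenius splitting $\cO_X \to F^e_*\cO_X$ with the ample bundle $B_n = A\otimes L^{\otimes n}$ and iterates, producing inclusions $H^i(X,B_n)\subset H^i(X,B_n^{\otimes q})\subset H^i(X,B_n^{\otimes q^2})\subset\cdots$ which vanish eventually by Serre; then, having established $H^i(X, A\otimes L^{\otimes n})=0$ for all $n>0$ with $A$ effective ample, it uses the splitting of $s_A F^e$ tensored with $L$ to embed $H^i(X,L)$ into $H^i(X, A\otimes L^{\otimes q})$. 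So your proof is shorter and conceptually cleaner but leans on the stronger (though standard) Fujita input, while the paper's proof is more elementary and self-contained, using the Frobenius splitting twice — once iterated to upgrade Serre vanishing to the whole family $A\otimes L^{\otimes n}$, and once with the section $s_A$ to trade $L$ for a member of that family. Minor points to keep straight in your write-up: the cohomological identification $H^i(X,F^e_*\mathcal{F})\cong H^i(X,\mathcal{F})$ is as abelian groups (the absolute Frobenius is not $k$-linear unless one twists), which is all that vanishing requires, and your replacement of $A$ by a multiple to make $D$ effective is needed precisely so that the section $s_D$ in \pref{df:Frobenius}(ii) exists.
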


In order to illustrate how the definition of global $F$-regularity
is used in practice, we include a proof.

\begin{proof}
Pick an ample line bundle
$
 A
$
on
$
 X
$.
Then
$
 B_n = A \otimes L ^{ \otimes n }
$
is ample for any
$
 n > 0.
$
Take
$
 q = p^e
$
such that
$
 F^e \colon \cO _X \to F^e_* \cO_X
$
splits. Taking tensor product with
$
 B_n
$,
we see that
$
 B_n
$
is a direct summand of
$
 F^e_* \cO_X \otimes B_n \cong F^e_* (B_n ^{ \otimes q }).
$
Thus we obtain the sequence of inclusions
$
 H^i ( X, B_n ) \subset H^i ( X, B_n ^{ \otimes q } ) \subset H^i ( X, B_n ^{ \otimes q^2 } )
 \subset \cdots
$,
and this should vanish eventually by the Serre's vanishing theorem.

Now assume
$
 A
$
is effective.
So far we checked
$
 H^i ( X, A \otimes L ^{ \otimes n } ) = 0
$
for all
$
 n > 0.
$
On the other hand, we know
$
 s_A F^e \colon \cO _X \to F^e_* \cO_X
$
splits for some
$
 e > 0
$.
Taking tensor product with
$
 L
$,
we obtain the inclusion
$
 H^i ( X, L ) \subset H^i ( X, A \otimes L ^{ \otimes q } )
$
to conclude the proof.
\end{proof}

\begin{remark}\label{rm:Q_factoriality}
For the sake of completeness, here we include a remark
on $\bQ$-factoriality of the varieties we are interested in.
Log terminal singularities and strongly
$F$-regular singularities are rational singularities.
Hence the $\bQ$-factoriality of surfaces with those singularities
comes for free by \cite[Proposition 17.1]{MR0276239}.
\end{remark}

The following proposition is the first step toward the proof of \pref{th:SS_conjecture_for_surfaces}.

\begin{proposition}\label{pr:globally_F-regular_surface_is_a_Mori_dream_space}
Let $X$ be a projective and globally $F$-regular surface defined over an $F$-finite field.
Then $X$ is a Mori dream space.
\end{proposition}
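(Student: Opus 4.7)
The plan is to verify the three defining conditions of a Mori dream space given in Definition \ref{df:Mori_dream_space}.

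For condition (\ref{it:fin_pic}), the $\bQ$-factoriality of $X$ is Remark \ref{rm:Q_factoriality}, and the zero-dimensionality of $\PPic^0_X$ follows from the vanishing $H^1(X, \cO_X) = 0$, which is Lemma \ref{lm:nef_vanishing_on_globally_F_regular_varieties} applied to the trivial (and hence nef) line bundle.

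The substantial step is condition (\ref{it:nef_implies_sa}). I would first invoke \cite[Theorem 4.3 (i)]{MR2628797} to produce an effective $\bQ$-divisor $\Delta$ on $X$ such that $(X, \Delta)$ is a klt log Fano pair in positive characteristic. Since $-(K_X+\Delta)$ is ample, every extremal ray of $\overline{\mathrm{NE}}(X)$ is $(K_X+\Delta)$-negative, and the cone theorem for klt log Fano surface pairs in positive characteristic then gives that $\overline{\mathrm{NE}}(X)$ is rational polyhedral and generated by finitely many contractible rays; dually, the nef cone is rational polyhedral with finitely many extremal generators. To promote each extremal nef class $L$ to a semi-ample line bundle, I would combine the contraction theorem for the $(K_X+\Delta)$-negative extremal rays with Keel's semi-ampleness theorem in positive characteristic: a nef-and-big line bundle on a projective surface is semi-ample provided its restriction to the (at most one-dimensional) exceptional locus is, which is automatic since nef line bundles on projective curves over any field have finitely generated section rings; the boundary case $L^2 = 0$ is handled by observing that $L$ is the pullback of an ample class under the fibration produced by the corresponding extremal contraction.

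Finally, condition (\ref{it:movable_cone_condition}) is automatic once the above is established. A normal projective surface admits no nontrivial small $\bQ$-factorial modification, because such a modification would have exceptional locus of codimension at least two and thus zero-dimensional, forcing it to be an isomorphism. Moreover, the semi-ampleness just proven implies every nef divisor class is movable, so the movable cone equals the nef cone; condition (\ref{it:movable_cone_condition}) therefore holds with the singleton collection $\{X\}$. The main obstacle of the whole argument is the semi-ampleness portion of (\ref{it:nef_implies_sa}): in positive characteristic this is not a formal consequence of rational polyhedrality of the cone, and genuinely uses Keel's theorem (or, equivalently, a base-point-free theorem for log Fano surfaces in positive characteristic).
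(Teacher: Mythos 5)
Your decomposition into the three conditions of Definition \ref{df:Mori_dream_space} is the same as the paper's, and conditions (\ref{it:fin_pic}) and (\ref{it:movable_cone_condition}) are handled in essentially the same way (for (\ref{it:movable_cone_condition}) the paper notes that any movable divisor on a surface is semi-ample by Zariski--Fujita, which amounts to your ``no nontrivial small modifications and movable $=$ nef''). The divergence is in condition (\ref{it:nef_implies_sa}): after producing the log Fano boundary via \cite[Theorem 4.3 (i)]{MR2628797}, the paper simply cites Tanaka's cone \emph{and contraction} theorems for log surface pairs in positive characteristic \cite[Theorems 3.13 and 3.21]{MR3319838}; the contraction theorem already furnishes, for each supporting facet of the effective cone, a semi-ample line bundle defining the contraction, so no appeal to Keel's theorem is needed. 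Your route through Keel is a genuinely different way to organize the key step, but as written it has a gap.

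The gap is in the sentence claiming that semi-ampleness of $L$ on the exceptional locus ``is automatic since nef line bundles on projective curves over any field have finitely generated section rings.'' Keel's theorem requires $L|_{\mathbb{E}(L)}$ to be \emph{semi-ample}, not to have finitely generated section ring, and these differ exactly in the relevant case: $\mathbb{E}(L)$ is a union of $L$-trivial curves, and a degree-zero nef line bundle on a projective curve always has finitely generated section ring (it is the ground field if the bundle is non-torsion), yet it is semi-ample only if it is torsion in the Picard group. Over an arbitrary $F$-finite field of positive characteristic --- the generality in which the proposition is stated --- a degree-zero restriction need not be torsion (e.g.\ a non-torsion degree-zero bundle on an elliptic curve over $\mathbb{F}_p(t)$), so the step is not automatic. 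It \emph{is} automatic over a finite field or over $\overline{\mathbb{F}}_p$, since there every degree-zero line bundle is torsion, and that would suffice for the application in the paper, but you would have to say so (or otherwise prove torsion-ness, e.g.\ using the contraction of the $(K_X+\Delta)$-negative face together with $\PPic^0_X=0$). A related soft spot: in the $L^2=0$ case, ``$L$ is the pullback of an ample class'' under the fibration is immediate only numerically; to get it as a line bundle (up to torsion, which is enough) you again need $\PPic^0_X=0$ and finiteness of the torsion part of the Picard group. The cleanest repair of your argument is exactly the paper's move: invoke the contraction theorem instead of Keel, which yields the semi-ample generators of the nef cone directly.
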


\begin{proof}
We directly check the conditions of \pref{df:Mori_dream_space}.
First of all, see \pref{rm:Q_factoriality} for the $\bQ$-factoriality.
Since
$
 H^1 (X, \cO_X)
$,
the tangent space of $\PPic^0_X$ at the origin,
is trivial by \pref{lm:nef_vanishing_on_globally_F_regular_varieties}, the condition \eqref{it:fin_pic} is satisfied.
To see \eqref{it:nef_implies_sa}, note that by \cite[Theorem 4.3 (i)]{MR2628797} there exists an effective
$\bQ$-divisor $\Delta$ on $X$ such that $(X,\Delta)$ is a log Fano pair.
By the cone and contraction theorems for surface pairs in positive characteristics
(see \cite[Theorem 3.13]{MR3319838} and \cite[Theorem 3.21]{MR3319838} respectively),
we see that the effective cone (hence the nef cone) of $X$ is rational polyhedral and that
every extremal ray of the nef cone is generated by a semi-ample line bundle
which defines the extremal contraction of the supporting facet of the effective cone.
Finally, since $X$ is a surface, any movable divisor is semi-ample by the Zariski-Fujita theorem.
Therefore the condition \eqref{it:movable_cone_condition} is automatically satisfied.
\end{proof}

We next prove the liftability of morphisms from globally $F$-regular varieties. 

\begin{lemma}\label{lm:line_bundles_lift}
Let $A$ be a complete local ring with the closed point
$
 \mu \in \Spec{A}
$,
and
$
 f \colon X_A \to \Spec{A} 
$
a flat projective morphism.
Suppose
$
 H^{2} ( X_{\mu}, \cO_{X_{\mu}}) = 0,
$
where
$
 X_{\mu}
$
is the closed fiber.
Then any line bundle $L_{\mu}$ on $X_{\mu}$ extends to a line bundle $L_A$ on $X_A$
so that $L_A|_{X_{\mu}}\simeq L_{\mu}$.
\end{lemma}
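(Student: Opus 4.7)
\medskip

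The plan is to proceed by the standard deformation-theoretic argument, splitting the problem into a formal part (lifting through infinitesimal neighborhoods of $\mu$) and an algebraization part (via Grothendieck's existence theorem), both of which are available thanks to completeness of $A$ and properness of $f$.

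More precisely, write $\mathfrak{m} \subset A$ for the maximal ideal, set $A_n := A/\mathfrak{m}^{n+1}$ (so $A_0 = k(\mu)$), and let $X_n := X_A \times_{\Spec A} \Spec A_n$; thus $X_0 = X_\mu$ and the $X_n$ all share the underlying topological space of $X_\mu$. I would first construct inductively a compatible system of line bundles $L_n$ on $X_n$ with $L_n|_{X_{n-1}} \simeq L_{n-1}$ and $L_0 = L_\mu$. For the inductive step, the ideal $I_n := \ker(A_{n+1} \twoheadrightarrow A_n) = \mathfrak{m}^{n+1}/\mathfrak{m}^{n+2}$ is annihilated by $\mathfrak{m}$ and hence a $k(\mu)$-vector space, and the short exact sequence of sheaves of abelian groups on $X_\mu$
\begin{equation}
 0 \to I_n \otimes_{k(\mu)} \cO_{X_\mu} \to \cO_{X_{n+1}}^{*} \to \cO_{X_n}^{*} \to 1, \quad a \mapsto 1+a,
\end{equation}
(where exactness on the left uses $I_n^2 = 0$ so that the exponential-style map is a group homomorphism) yields in cohomology the obstruction map
\begin{equation}
 H^1(X_n, \cO_{X_n}^{*}) \to H^2(X_\mu, I_n \otimes_{k(\mu)} \cO_{X_\mu}) \simeq I_n \otimes_{k(\mu)} H^2(X_\mu, \cO_{X_\mu}).
\end{equation}
The target vanishes by hypothesis, so the class of $L_n$ lifts to some $L_{n+1}$ on $X_{n+1}$.

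Having produced a compatible system $(L_n)_{n \geq 0}$, I would then invoke Grothendieck's existence theorem (EGA III, Thm.~5.1.4): since $A$ is complete (actually Noetherian complete local) and $f \colon X_A \to \Spec A$ is proper, the category of coherent sheaves on $X_A$ is equivalent to the category of compatible systems of coherent sheaves on $(X_n)$, so the formal line bundle $\varprojlim L_n$ algebraizes to a coherent sheaf $L_A$ on $X_A$. The property of being a line bundle can be checked on the closed fiber (by Nakayama, since $L_A$ is $A$-flat as each $L_n$ is $A_n$-flat by local freeness), so $L_A$ is indeed a line bundle with $L_A|_{X_\mu} \simeq L_\mu$.

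The only substantive point in this argument is the obstruction calculation in the inductive step; everything else is a direct invocation of general machinery. In particular, no completeness or Noetherianness beyond what is assumed on $A$ is needed, and no further hypothesis on $X_\mu$ is required since the vanishing of $H^2(X_\mu, \cO_{X_\mu})$ is precisely what kills every obstruction at once.
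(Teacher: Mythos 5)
Your proof is correct and takes essentially the same route as the paper: the paper's entire proof is a citation of \cite[Corollary 8.5.6(a)]{MR2222646}, and your argument (killing the obstruction in $I_n\otimes_{k(\mu)}H^2(X_\mu,\cO_{X_\mu})$ at each infinitesimal step, then algebraizing the formal line bundle by Grothendieck's existence theorem) is precisely the standard proof of that cited result. Two small points you gloss over but which are standard: identifying the kernel of $\cO_{X_{n+1}}^{*}\to\cO_{X_n}^{*}$ with $I_n\otimes_{k(\mu)}\cO_{X_\mu}$ uses the flatness of $f$ (not just $I_n^2=0$), and deducing $A$-flatness of $L_A$ from flatness of the truncations $L_n$ uses the local criterion of flatness.
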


\begin{proof}
See \cite[Corollary 8.5.6(a)]{MR2222646}.
\end{proof}


\begin{proposition}\label{pr:morphisms_lift}
Let $A$ and
$
 f \colon X_A \to \Spec{A}
$ be as in \pref{lm:line_bundles_lift}.
Assume that $X_{\mu}$ is globally $F$-regular.
Suppose that there exists a morphism with connected fibers
$
 g_{\mu} \colon X_{\mu} \to Y_{\mu}
$
to a normal projective variety $Y_{\mu}$.
Then $g_{\mu}$ lifts to a morphism with connected fibers
$
 g_A \colon X_A \to Y_A
$
over
$A$,
where $Y_A$ is a normal projective scheme over $A$.
\end{proposition}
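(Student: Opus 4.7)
The plan is to realise $g_A$ as the morphism associated with a lift to $X_A$ of a semiample line bundle together with its global sections on $X_\mu$. Since $g_\mu$ has connected fibres and $Y_\mu$ is normal, $g_{\mu *}\cO_{X_\mu}=\cO_{Y_\mu}$, so after choosing a sufficiently positive ample line bundle $H_\mu$ on $Y_\mu$ and setting $L_\mu:=g_\mu^{*}H_\mu$, the projection formula gives
\[
\mathrm{Proj}\,\bigoplus_{n\ge 0}H^{0}(X_\mu,L_\mu^{\otimes n})\;\cong\;\mathrm{Proj}\,\bigoplus_{n\ge 0}H^{0}(Y_\mu,H_\mu^{\otimes n})\;\cong\;Y_\mu,
\]
and $g_\mu$ is identified with the morphism defined by $|L_\mu^{\otimes n}|$ for $n\gg 0$. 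Hence it suffices to lift $L_\mu$ together with enough of its sections to $X_A$.

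To carry out this lifting, I first note that $L_\mu$ is semiample, hence nef, so \pref{lm:nef_vanishing_on_globally_F_regular_varieties} yields $H^{i}(X_\mu,L_\mu^{\otimes n})=0$ for every $i>0$ and $n\ge 0$; in particular $H^{2}(X_\mu,\cO_{X_\mu})=0$, and \pref{lm:line_bundles_lift} then produces a line bundle $L_A$ on $X_A$ with $L_A|_{X_\mu}\simeq L_\mu$. Combined with the theorem on cohomology and base change, these vanishings show that each $f_{*}L_A^{\otimes n}$ is a free $A$-module whose formation commutes with base change and that all higher direct images vanish; in particular the restriction $H^{0}(X_A,L_A^{\otimes n})\to H^{0}(X_\mu,L_\mu^{\otimes n})$ is surjective. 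Form the graded $A$-algebra $R_A:=\bigoplus_{n\ge 0}H^{0}(X_A,L_A^{\otimes n})$; since $R_A/\mathfrak{m}_AR_A=R_\mu$ is the finitely generated homogeneous coordinate ring of $Y_\mu$, graded Nakayama applied to each (free) graded piece shows that $R_A$ is finitely generated over $A$. Set $Y_A:=\mathrm{Proj}\,R_A$, a projective $A$-scheme with central fibre $Y_\mu$. The base locus of the lifted sections is closed in $X_A$ and disjoint from $X_\mu$; properness of $f$ together with completeness of $A$ forces its image in $\Spec A$ to be empty, so the base locus itself is empty. Therefore $|L_A^{\otimes n}|$ defines a morphism $g_A\colon X_A\to Y_A$ over $A$ restricting to $g_\mu$ on $X_\mu$. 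Its Stein factorisation restricts to the (trivial) Stein factorisation of $g_\mu$, so a final Nakayama argument applied to the resulting finite part upgrades $g_A$ to a morphism with connected fibres.

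The main obstacle will be verifying that $Y_A$ is normal, since normality does not deform automatically over a general complete local base even when the central fibre is normal. I would handle this by passing to the normalisation $\widetilde{Y_A}\to Y_A$, noting that $g_A$ factors through it (using that $X_A$ is normal in the intended application, where $X_\mu$ is strongly $F$-regular and one chooses a normal model), and observing that the normalisation is finite and is an isomorphism over $Y_\mu$, so one more Nakayama argument forces it to be a global isomorphism. The key input that makes every step above possible is the strong vanishing \pref{lm:nef_vanishing_on_globally_F_regular_varieties} supplied by global $F$-regularity, which is exactly what both the lift of $L_\mu$ and the simultaneous lift of all of its sections require.
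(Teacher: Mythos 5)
Your proposal is correct and follows essentially the same route as the paper's proof: both reduce to lifting $L_\mu=g_\mu^*H_\mu$ for an ample $H_\mu$ on $Y_\mu$, obtain the line bundle lift from the $H^2(X_\mu,\cO_{X_\mu})=0$ case of \pref{lm:nef_vanishing_on_globally_F_regular_varieties} together with \pref{lm:line_bundles_lift}, lift the sections using nef vanishing plus cohomology and base change, deduce global generation from the closedness of the base locus over the local base, and finish with a Stein-factorization-type cleanup. The only difference is packaging: the paper works with a single very ample $\cO_{Y_\mu}(1)$ and its complete linear system and simply defines $Y_A$ as the Stein factorization of the resulting morphism, which renders the finite generation of your section ring $R_A$ and the normality of its $\mathrm{Proj}$ (your ``main obstacle'') unnecessary.
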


\begin{proof}
Pick a very ample line bundle
$
 \cO _{ Y _{ \mu } } (1)
$
on
$
 Y _{ \mu }
$, so that the complete linear system corresponding to the line bundle
$
 L_{\mu} = g _{ \mu }^* \cO _{ Y _{ \mu } } (1)
$
defines the morphism $g_{\mu}$: to see this use the equality
$
 H^0 ( X _{ \mu }, L _{ \mu } ) = H^0 ( Y _{ \mu }, \cO _{ Y _{ \mu } } (1) )
$,
which follows from the assumption that
$
 g _{ \mu }
$
has connected fibers.

Since $X_{\mu}$ is globally $F$-regular,
we see
$
 H^{2}(X_{\mu},\cO_{X_{\mu}}) = 0
$
by \pref{lm:nef_vanishing_on_globally_F_regular_varieties}.
Hence by \pref{lm:line_bundles_lift} we obtain a line bundle $L_A$ on $X_A$
which is a lift of $L _{ \mu }$.

Also, since $ L _{ \mu } $ is nef, we see
$
 H^{1}(X_{\mu},L_{\mu}) = 0
$
again by \pref{lm:nef_vanishing_on_globally_F_regular_varieties}.
Then \cite[Chapter III, Corollary 12.9]{Hartshorne} implies $H^{1}(X_A,L_A)=0$.
Applying \cite[Chapter III, Theorem 12.11 (b)]{Hartshorne} for $i=1$ and then
\cite[Chapter III, Theorem 12.11 (a)]{Hartshorne} for $i=0$, we can check that
$H^{0}(X_A,L_A)$ is a free $A$ module satisfying
\begin{equation}
 H^{0} ( X_A, L_A ) \otimes _{A} k (\mu)
 \simto H^{0} ( X_{\mu}, L_{\mu} ).
\end{equation}
Since
$
 L _{ \mu }
$
is globally generated and the base locus of a linear system is closed,
this implies that $L_A$ is globally generated and that the corresponding complete
linear system defines a morphism
$
 g'_A \colon X_A \to Y'_A
$, which restricts to the morphism $g_{\mu}$ over
$
 \mu \in \Spec A.
$
By taking the Stein factorization
$
 g_A \colon X_A \to Y_A
$
of $g'_A$, we obtain the desired lift of $g _{ \mu }$.
\end{proof}

In the final step of the proof of \pref{th:SS_conjecture_for_surfaces}, we use the following
fact that the property of being of Fano type descends under base field extension.
\begin{proposition}\label{pr:Mori_dream_space_and_base_change}
Let $X$ be a normal projective variety defined over a field $k$, and
$
 k \subset K
$
an extension of fields.
If the base change $X_K=X\otimes_{k}K$ is a Mori dream space, so is
$X$.
\end{proposition}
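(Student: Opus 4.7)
The plan is to verify each of the three conditions in \pref{df:Mori_dream_space} for $X$ by descending the corresponding property of $X_K$ along the faithfully flat base change $\Spec K \to \Spec k$, together with the base-change compatibility of the Picard scheme. The key general facts in play are that intersection numbers, cohomology of coherent sheaves, and finite generation of graded algebras all behave well under $(-)\otimes_k K$.

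For \pref{it:fin_pic}: given a Weil divisor $D$ on $X$, its pullback $D_K$ is Weil on $X_K$, so $mD_K$ is Cartier for some $m>0$ by $\bQ$-factoriality of $X_K$; invertibility of a coherent sheaf descends along faithful flatness, so $\cO_X(mD)$ is invertible and $X$ is $\bQ$-factorial. The isomorphism $\PPic^0_{X_K} \cong \PPic^0_X \times_{\Spec k}\Spec K$ preserves dimension, so $\dim \PPic^0_X = 0$. For \pref{it:nef_implies_sa}: the pullback $\phi\colon N^1(X)_{\bQ} \to N^1(X_K)_{\bQ}$ is injective, and both nef-ness and semi-ampleness of a line bundle on $X$ are equivalent to the corresponding property of its base change to $X_K$ (the latter by flat base change applied to $H^0$ combined with faithfully flat descent of global generation). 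Hence $\mathrm{Nef}(X) = \phi^{-1}(\mathrm{Nef}(X_K))$ is rational polyhedral, and a primitive integral generator of each extremal ray gives a nef line bundle on $X$ whose base change is semi-ample (since $X_K$ is a Mori dream space), and is therefore itself semi-ample.

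For \pref{it:movable_cone_condition}: analogously $\mathrm{Mov}(X) = \phi^{-1}(\mathrm{Mov}(X_K))$. For each SQM $f_{i,K}\colon X_K \dasharrow X_{i,K}$ in the Mori dream structure of $X_K$, choose a $\bQ$-divisor $L$ on $X$ lying in the interior of the preimage chamber and form its section ring $R(X,L) = \bigoplus_{n \ge 0} H^0(X, \lfloor nL \rfloor)$. Its base change is the finitely generated $K$-algebra $R(X_K,L_K)$, so $R(X,L)$ itself is finitely generated by faithfully flat descent; then $X_i := \operatorname{Proj} R(X,L)$ is a normal projective $k$-variety with $X_i \times_k \Spec K \cong X_{i,K}$, and the induced rational map $X \dasharrow X_i$ is an SQM because it becomes one after faithfully flat base change. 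The hardest step is this third one: one must carefully match the chamber structure of $\mathrm{Mov}(X)$ with that of $\mathrm{Mov}(X_K)$ and produce, inside each chamber, a $\bQ$-divisor on $X$ whose section ring descends the corresponding SQM. Beyond that, the remaining checks (finite generation, normality, birationality) are routine faithful-flat descent.
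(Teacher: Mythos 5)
Your verifications of conditions (1) and (2) of \pref{df:Mori_dream_space} are essentially sound: $\bQ$-factoriality, $\dim\PPic^0=0$, nefness and semi-ampleness all descend along $\Spec K\to\Spec k$ as you say (the $\PPic^0$ part is exactly what the paper does too). But note that the paper then takes a much shorter route: by the characterization of Mori dream spaces in \cite[Proposition 2.9]{MR1786494} it suffices to prove finite generation of a Cox ring of $X$, and this follows from the flat base change isomorphism $R(X,\Gamma)\otimes_k K\cong R(X_K,\Gamma_K)$ together with descent of finite generation of algebras along $k\subset K$; no analysis of cones, chambers or small modifications is needed at all. Your plan instead verifies the definition directly, and it is precisely in condition (3) --- the step you yourself flag as hardest and then dismiss as ``routine faithful-flat descent'' --- that there is a genuine gap.

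The problematic assertion is that for $L$ chosen in the interior of the preimage chamber $\phi^{-1}(\mathrm{Nef}(X_{i,K}))$ one has $X_i\times_k\Spec K\cong X_{i,K}$ for $X_i=\operatorname{Proj}R(X,L)$. Since $\phi\colon N^1(X)_{\bQ}\to N^1(X_K)_{\bQ}$ need not be surjective, the subspace $\phi(N^1(X)_{\bQ})$ may meet $\mathrm{Nef}(X_{i,K})$ only along a proper face, so that $L_K$ lies on a wall even though $L$ is interior to the preimage chamber. Then $\operatorname{Proj}R(X_K,L_K)$ is the ample model of $L_K$, a nontrivial contraction of $X_{i,K}$ rather than $X_{i,K}$ itself; if the wall is of flipping type this base change is not $\bQ$-factorial, and if it is divisorial the induced map from $X_K$ is not even small. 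In the first situation faithfully flat descent cannot give $\bQ$-factoriality of $X_i$ (indeed $\bQ$-factoriality is not a geometric property, so one cannot expect it for the base change and must prove it over $k$ by a different argument), and in general one must show that such degenerate chambers can be discarded or otherwise handled while the remaining models still cover $\mathrm{Mov}(X)$. None of this is routine, and it is exactly the kind of bookkeeping the paper's Cox-ring argument avoids; to complete your approach you either need to supply these arguments or switch to the finite-generation criterion of Hu--Keel as the paper does.
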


\begin{proof}
Note first that the condition
$
 \dim \PPic^0 = 0
$
is stable under the change of base field, because of the functorial behaviour
$
 \PPic _{ X_K / K }
 \cong
 \PPic _{ X_k / k } \otimes _{ k } K
$.
Hence by the characterization of Mori dream spaces
\cite[Proposition 2.9]{MR1786494},
it is enough to show that a Cox ring of $X$ is of finite type over $k$.

Let $\Gamma\subset\Divi{X}$ be a finitely generated group of Cartier divisors on $X$
which defines a Cox ring of $X$. Then we have the canonical isomorphism
\begin{equation}\label{eq:base_change_of_cox_ring}
 R ( X, \Gamma ) \otimes_k K \simto R( X_K, \Gamma_K ),
\end{equation}
where $\Gamma_K$ is the pull-back of $\Gamma$ to $X_K$.

As explained in \cite[Lemma 3.2]{okawa2011images},
the multi-section ring on the RHS of
\eqref{eq:base_change_of_cox_ring} is of finite type over $K$.
Using the descent of finite generation with respect to the field extension
$
 k \subset K
$,
we obtain the finite generation of $R(X, \Gamma)$.
\end{proof}

\begin{corollary}\label{cr:Fano_type_and_base_change}
Let $X$ be a normal projective variety defined over a field $k$ of characteristic zero,
and
$
 k \subset K
$
an extension of fields.
If the base change
$
 X_K = X \otimes_{k} K
$
is of Fano type, so is
$X$.
\end{corollary}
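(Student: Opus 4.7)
The plan is to combine \pref{pr:Mori_dream_space_and_base_change} with the MMP technique used in the proof of \pref{th:SS_conjecture_for_surfaces} itself. First, since $X_K$ is of Fano type, it is a Mori dream space by \cite[Corollary 1.3.2]{MR2601039}; applying \pref{pr:Mori_dream_space_and_base_change} one then concludes that $X$ itself is a Mori dream space, and in particular $\bQ$-factorial.

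Next, I would transfer the two numerical properties of a Fano-type variety from $X_K$ down to $X$. Klt singularities descend along field extensions because log resolutions and discrepancies commute with the flat base change $k \subset K$, so $X$ is klt; and bigness of $-K_X$ descends because
\begin{equation*}
 \dim_k H^0(X, -mK_X) = \dim_K H^0(X_K, -mK_{X_K})
\end{equation*}
for every integer $m \geq 0$. Combined with the first step, this places $X$ in the setting of a klt $\bQ$-factorial Mori dream space whose anti-canonical divisor is big.

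I would then run a $(-K_X)$-MMP on $X$. Since $X$ is a Mori dream space, this MMP terminates at some model $X_\ell$ on which $-K_{X_\ell}$ is nef, and the bigness of $-K_{X_\ell}$ is preserved by each step because section rings of $\bQ$-Cartier divisors are unchanged under birational transformations of normal projective varieties. Granted klt singularities along the way, $X_\ell$ is then a weak Fano variety, which is of Fano type by \pref{lm:weak_Fano_is_of_Fano_type}, and \pref{lm:trace_back_mmp} transports this property back to $X$.

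The delicate point is the preservation of klt singularities through what is a priori a $K_X$-positive MMP. I would handle this by appealing to the Mori dream space framework, in which the polyhedral structure of the Cox ring forces every intermediate contraction and flip to be sufficiently well-behaved; alternatively, one can first descend a klt log Fano boundary $\Delta_K$ on $X_K$ to a $k$-rational boundary $\Delta$ on $X$ by reducing the finitely much data defining $\Delta_K$ to a finite Galois subextension and averaging, and then run the corresponding $(K_X + \Delta)$-MMP, for which the klt property is standardly preserved.
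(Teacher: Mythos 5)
Your skeleton is essentially the paper's: descend the Mori dream space property via \pref{pr:Mori_dream_space_and_base_change}, descend bigness of $-K_X$, run a $(-K_X)$-MMP to a model $X_\ell$ with $-K_{X_\ell}$ semi-ample and big, and conclude with \pref{lm:weak_Fano_is_of_Fano_type} and \pref{lm:trace_back_mmp}. You also correctly isolate the delicate point — klt-ness of $X_\ell$, since the MMP is $K_X$-positive — but neither of your proposed fixes actually closes it. Fix (a), that ``the Mori dream space framework forces every intermediate contraction to be sufficiently well-behaved,'' is not a theorem: a Mori dream space can have $K$-positive extremal contractions whose targets are not klt (contract a negative curve of positive genus on a surface), and the Cox-ring structure alone does not exclude this. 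What excludes it here is the Fano-type hypothesis over $K$, and that is exactly how the paper proceeds: base-change the $(-K_X)$-MMP along $k \subset K$ to obtain a $(-K_{X_K})$-MMP, use \cite[Lemma 3.1]{MR3275656} and \cite[Theorem 5.1]{MR2944479} to see that each $X_i \otimes_k K$ is again of Fano type, deduce that $X_\ell \otimes_k K$ is klt with $-K$ semi-ample and big, and descend this klt-ness to $X_\ell$, which is then weak Fano. Without this step (or an equivalent input) your MMP argument does not go through. (A minor point: your assertion that $X$ is klt uses the $\bQ$-factoriality obtained in your first step — being of Fano type does not by itself make $K_{X_K}$ $\bQ$-Cartier — and in any case what is needed is klt-ness of $X_\ell$, not of $X$.)

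Your fix (b) is the germ of a genuinely different and viable route: descend the boundary itself rather than the MMP. But as stated it has a gap: for a transcendental extension $K/k$ there is no finite Galois subextension over which $\Delta_K$ is defined. One must first note that $\Delta_K$, together with the data witnessing klt-ness and ampleness of $-(K_{X_K}+\Delta_K)$, is defined over a finitely generated subextension $k \subset k' \subset K$, spread the pair out over a $k$-variety with function field $k'$, and specialize to a general closed point to obtain a log Fano pair over a finite extension of $k$; only then can one pass to a Galois closure and average, using convexity of the klt and ampleness conditions, and descend the invariant $\bQ$-divisor to $k$ (characteristic zero). Note also that once such a boundary exists over $k$ you are already done, so the concluding $(K_X+\Delta)$-MMP in your sketch is superfluous. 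In short: either insert the paper's base-change-of-the-MMP argument, or carry out the descent of the boundary in full; as written, the proof has a genuine gap.
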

\begin{proof}
By \cite[Corollary 1.3.2]{MR2601039} and
\pref{pr:Mori_dream_space_and_base_change},
$X$ is a Mori dream space.
Since $-K_{X_K}$ is big, so is $-K_X$. Hence we find a ($-K_X$)-MMP
$X=X_0\dasharrow X_1\dasharrow \cdots \dasharrow X_{\ell}$ such that $-K_{X_{\ell}}$ is semi-ample
and big by \cite[Proposition 1.11(1)]{MR1786494}.
Taking the base change by $k\subset K$, we get a ($-K_{X_K}$)-MMP.
Since $X_K$ is of Fano type, so are
$
 X_i \otimes_k K
$
by \cite[Lemma 3.1]{MR3275656} and \cite[Theorem 5.1]{MR2944479}.
Hence $X_{\ell}$ also has at worst log terminal singularities, which means that
$X_{\ell}$ is a weak Fano variety.
Finally we can use
\pref{lm:trace_back_mmp}
to conclude the proof.
\end{proof}

%
%

\section{Proof of \pref{th:SS_conjecture_for_surfaces}}

\begin{proof}[Proof of \pref{th:SS_conjecture_for_surfaces}]

Let
$
 \pi \colon X_A \to \Spec{A}
$
be a model of $X$, so that
$X_{\mu}$ is globally $F$-regular for a closed point
$
 \mu \in \Spec{A}
$.
By \pref{pr:globally_F-regular_surface_is_a_Mori_dream_space},
$X_{\mu}$ is a Mori dream space.

Since $X_{\mu}$ is a Mori dream space, there exists a
$
 ( - K _{ X _{\mu} } )
$-MMP
\begin{equation}\label{eq:MMP_in_positive_characteristic}
 X_{\mu} = X _{\mu, 0} \to X _{\mu, 1} \to \cdots \to X _{\mu, \ell}
\end{equation}
such that
$
 - K _{ X _{ \mu, \ell } }
$
is semi-ample and big by \cite[Proposition 1.11(1)]{MR1786494}.
Since each morphism is a projective morphism with connected fibers
between normal varieties, which in particular is an algebraic fiber space,
we see that
$X_{\mu, i}$ are also globally $F$-regular by
\cite[Lemma 2.14]{MR3275656}.

Let $A_{\mu}$ be the localisation of $A$ at $\mu$.
Consider its completion
$\Ahat$ at the maximal ideal, and
take the base change of
$
 X_A \to \Spec{A}
$
by
$
 A \subset \Ahat
$.
In the rest, we will write
$
 A
$
to mean
$
 \Ahat
$.

The generic (resp. closed) point of
$
 \Spec A ( = \Ahat )
$
will be denoted by $\xi$ (resp. $\mu$).
Note that the generic fiber
$
 X_{\xi}
$
is still of globally $F$-regular type.

By \pref{pr:morphisms_lift}, the morphisms in
\eqref{eq:MMP_in_positive_characteristic}
lift over
$
 \Spec { A }
$.
They will be denoted by
$
 X_{A} = X_{A,0} \to X_{A,1} \to \cdots \to X_{A,\ell}
$.
Since
$
 - m K_{X_{\mu, \ell}}
$ is globally generated for sufficiently divisible $m$,
by the same arguments as in the proof of \pref{pr:morphisms_lift},
we see that all the global sections of
$
 - m K _{ X _{ \mu, \ell } }
$
lift over $\Spec{A}$.
Thus, restricting to the generic fiber, we obtain a $(-K_{X_{\xi}})$-MMP
$
 X _{ \xi }
 =
 X _{ \xi, 0 } \to X _{ \xi, 1 }
 \to \cdots \to X _{ \xi, \ell }
$
such that
$
 - K _{ X _{ \xi, \ell } }
$
is semi-ample and big.

Using \cite[Lemma 2.14]{MR3275656}, we see that
$
 X _{ \xi, \ell }
$
is also of globally $F$-regular type.
Hence by
\cite[Proposition 2.17]{MR3275656},
it has only log terminal singularities.
Thus we see that it is a weak Fano variety.
Finally we can use
\pref{lm:weak_Fano_is_of_Fano_type} and
\pref{lm:trace_back_mmp}
to conclude the proof.
\end{proof}

%
%

\section{Concluding discussion: toward higher dimensions}
When we try to generalize our result to higher dimensions,
the following issues show up.
First of all, the existence of an anti-canonical MMP was essential in our arguments.
For that purpose we have to settle the following question affirmatively.

\begin{question}\label{qs:globally_F-regular_variety_is_a_Mori_dream_space}
Is a projective and globally $F$-regular variety over a finite field always a Mori dream space?
\end{question}

Now let $X$ be a projective variety of globally $F$-regular type.
If \pref{qs:globally_F-regular_variety_is_a_Mori_dream_space} is answered affirmatively,
then we obtain an anti-canonical MMP for a reduction $ X _{ \mu } $ of $X$ to
a positive characteristic ending up with a weak log Fano model.
Suppose that
$
 f_{\mu,n} \colon X_{\mu,n}\dasharrow X_{\mu,n+1}
$
is a step of such an anti-canonical MMP.
If it is a divisorial contraction, we can lift it to characteristic zero
as before.

Suppose that it is a $(-K_{X_{\mu,n}})$-flip.
In this case we can still lift the flipping contraction, say
$
 X _{ \mu, n } \to Y _{ \mu }
$, to characteristic zero.
Let $g_A\colon X_{A, n}\to Y_A$ be the $(-K_{X_{A,n}})$-flipping contraction thus obtained.
Now the question is if one can lift the flip
$
 X_{\mu, n+1} \to Y _{ \mu }
$
to characteristic zero as well.

The existence of the $(-K_{X_{\mu,n}})$-flip is equivalent to
the finite generation of the
$
 \cO _{ Y _{ \mu } }
$-algebra
\begin{equation}\label{eq:known_finite_generation}
 \bigoplus_{m \ge 0}g_{\mu,n*}\cO_{X_{\mu,n}}(-mK_{X_{\mu,n}})
 = \bigoplus_{m\ge 0}\cO_{Y_{\mu}}(-mK_{Y_{\mu}}).
\end{equation}
The corresponding statement for the existence of
$(-K_{A,n})$-flip holds as well.

If one could show that the restrictions of the sheaves
$
 \omega_{Y_A}^{[m]} = ( \omega _{ Y _{ A } }^m ) ^{ ** }
$
to $Y_{\mu}$ are reflexive for all $m$ which is divisible by a fixed integer $N > 0$,
then that would imply that the natural homomorphism
\begin{equation}
 \bigoplus_{m\ge 0, \ N|m}\cO_{Y_A}(-mK_{Y_A})\to
\bigoplus_{m\ge 0, \ N|m}\cO_{Y_{\mu}}(-mK_{Y_{\mu}})
\end{equation}
is surjective. Then, using the Nakayama's lemma, we can derive the desired finite generation
from that of \eqref{eq:known_finite_generation}.

Unfortunately the author does not see how to verify the necessary reflexivity.

%

\newcommand{\etalchar}[1]{$^{#1}$}
\def\cprime{$'$} \def\cprime{$'$}
\providecommand{\bysame}{\leavevmode\hbox to3em{\hrulefill}\thinspace}
\providecommand{\MR}{\relax\ifhmode\unskip\space\fi MR }
\providecommand{\MRhref}[2]{%
  \href{http://www.ams.org/mathscinet-getitem?mr=#1}{#2}
}
\providecommand{\href}[2]{#2}

Department of Mathematics,
Graduate School of Science,
Osaka University,
Machikaneyama 1-1,
Toyonaka,
Osaka,
560-0043,
Japan.

{\em e-mail address}\ : \  okawa@math.sci.osaka-u.ac.jp

\end{document}